\numberwithin{equation}{section}
\newcommand{\ep}{\varepsilon}
\newcommand{\bk}{\mathbf{k}}
\newcommand{\bm}{\mathbf{m}}
\newcommand{\bbZ}{\mathbb{Z}}
\renewcommand{\Re}{\mathrm{Re}}
\theoremstyle{definition}
\newtheorem{theorem}{Theorem}[section]
\newtheorem{proposition}[theorem]{Proposition}
\newtheorem{lemma}[theorem]{Lemma}
\newtheorem{corollary}[theorem]{Corollary}
\newtheorem{definition}[theorem]{Definition}
\theoremstyle{remark}
\newtheorem{remark}[theorem]{Remark}
\title{Cyclic sum formula for certain parametrized multiple zeta values}
\author{Hanamichi Kawamura, Anju Yokoi}
\address[Hanamichi Kawamura]{Department of Mathematics, Faculty of Science Division I, Tokyo University of Science, 1-3 Kagurazaka, Shinjuku-ku, Tokyo, 162-8601, Japan}
\email{1121026@ed.tus.ac.jp}
\address[Anju Yokoi]{Ikeda Senior High School Attached to Osaka Kyoiku University, 1-5-1, Midorigaoka, Ikeda-shi, Osaka, 563-0026, Japan}
\email{anju.scorpion@icloud.com}
\subjclass[2020]{11M32.}
\keywords{multiple zeta values, multiple parametrized series, cyclic sum formula}
\begin{document}
\begin{abstract}
    Ohno--Wakabayashi's cyclic sum formula for multiple zeta-star values is generalized by Igarashi with one or two parameters.
    In this article, we give a possible answer for one of his problems about a generalization with three parameters.
\end{abstract}
\maketitle
\section{Introduction}
We sometimes encounter a remarkable symmetry on computing a cyclic sum on certain multiple nested sum.
For the case of the \emph{multiple zeta-star value}
\[\zeta^{\star}(k_{1},\ldots,k_{d})\coloneqq\sum_{0<m_{1}\le\cdots\le m_{d}}\prod_{i=1}^{d}\frac{1}{m_{i}^{k_{i}}},\]
defined for positive integers $k_{1},\ldots,k_{d}$ ($k_{d}\neq 1$), Ohno--Wakabayashi found the following formula.
\begin{theorem}[{\cite[Theorem 1]{ow06}}]\label{thm:ow}
    Let $k_{1},\ldots,k_{d}$ be positive integers and assume $k\coloneqq k_{1}+\cdots+k_{d}\neq d$.
    Then we have
    \[\sum_{i=1}^{d}\sum_{j=0}^{k_{i}-2}\zeta^{\star}(j+1,k_{i+1},\ldots,k_{d},k_{1},\ldots,k_{i-1},k_{i}-j)=k\zeta(k+1).\]
\end{theorem}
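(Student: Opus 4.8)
The plan is to expand each multiple zeta-star value on the left-hand side into an iterated series and to reduce the identity to a single statement about weight-$(k+1)$ sums. Each summand $\zeta^{\star}(j+1,k_{i+1},\ldots,k_{i-1},k_i-j)$ has depth $d+1$, so I would write it as
\[
\sum_{0<m_0\le m_1\le\cdots\le m_d}\frac{1}{m_0^{\,j+1}}\left(\prod_{\ell=1}^{d-1}\frac{1}{m_\ell^{\,a_\ell^{(i)}}}\right)\frac{1}{m_d^{\,k_i-j}},
\]
where $(a_1^{(i)},\ldots,a_{d-1}^{(i)})=(k_{i+1},\ldots,k_d,k_1,\ldots,k_{i-1})$ records the cyclically rotated middle exponents. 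Every such term has weight $k+1$, matching the right-hand side; moreover $k\ne d$ is precisely the condition that some $k_i\ge2$, which makes the inner $j$-sum nonempty and keeps each value admissible, since the last entry satisfies $k_i-j\ge2$.

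I would then perform the inner summation over $j$. As only the factor $m_0^{-(j+1)}m_d^{-(k_i-j)}$ depends on $j$ and $m_0\le m_d$, I split the summation region into the diagonal $m_0=m_1=\cdots=m_d$ and its complement $m_0<m_d$. On the diagonal the $j$-summand is the constant $m^{-(k+1)}$, so the diagonal part of the $i$-th summand is $(k_i-1)\zeta(k+1)$, and summing over $i$ gives $(k-d)\zeta(k+1)$. Off the diagonal the finite geometric sum collapses through the partial-fraction identity
\[
\sum_{j=0}^{k_i-2}\frac{1}{m_0^{\,j+1}m_d^{\,k_i-j}}=\frac{1}{m_d-m_0}\left(\frac{1}{m_0^{\,k_i-1}m_d}-\frac{1}{m_d^{\,k_i}}\right),
\]
which introduces the \emph{connector} $1/(m_d-m_0)$ linking the innermost and outermost summation variables.

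It then remains to prove that the cyclic sum of the off-diagonal pieces equals $d\,\zeta(k+1)$; together with the diagonal contribution this yields $(k-d)\zeta(k+1)+d\,\zeta(k+1)=k\,\zeta(k+1)$. The natural route is to sum the displayed decomposition over $i$ and to process the connector: repeatedly applying $\frac{1}{m_0^{a}(m_d-m_0)}=\frac{1}{m_d\,m_0^{a}}+\frac{1}{m_d\,m_0^{a-1}(m_d-m_0)}$ peels the power of $m_0$ off the head term, producing genuine nested sums together with residual connector terms. I expect the residual connector terms to telescope around the cycle and the genuine nested sums to recombine, so that the total collapses to $d\,\zeta(k+1)$. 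As a consistency check, when $d=1$ there are no middle variables and this off-diagonal identity is exactly the classical sum formula for double zeta values, $\sum_{j=0}^{k_1-2}\zeta(j+1,k_1-j)=\zeta(k_1+1)$.

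I expect this last step to be the main obstacle. Extracting the diagonal and applying the partial-fraction identity are routine, but organizing the cyclic regrouping so that the rotated middle exponents line up across a shift, showing that precisely $d\,\zeta(k+1)$ survives, and justifying every rearrangement by absolute convergence (guaranteed by $k\ne d$) will require careful bookkeeping of the $d+1$ nested variables.
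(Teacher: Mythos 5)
Your opening moves are correct, and they match the standard opening of this kind of proof (the one in \cite{ow06} and, in parametrized form, in this paper): the expansion into nested sums, the extraction of the diagonal $m_0=\cdots=m_d$ contributing $(k-d)\zeta(k+1)$, and the collapse of the $j$-sum into the connector identity are all right. So you have correctly reduced the theorem to showing (writing $(a_1^{(i)},\ldots,a_{d-1}^{(i)})$ for the rotated middle exponents, as you do)
\[
\sum_{i=1}^{d}\ \sum_{\substack{0<m_0\le m_1\le\cdots\le m_d\\ m_0<m_d}}\left(\prod_{\ell=1}^{d-1}\frac{1}{m_\ell^{a_\ell^{(i)}}}\right)\frac{1}{m_d-m_0}\left(\frac{1}{m_0^{k_i-1}m_d}-\frac{1}{m_d^{k_i}}\right)=d\,\zeta(k+1).
\]
The genuine gap is that your proposed tool for this last identity is circular. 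Iterating your peeling identity on the first connector term yields
\[
\frac{1}{m_0^{k_i-1}m_d(m_d-m_0)}=\sum_{j=0}^{k_i-2}\frac{1}{m_0^{j+1}m_d^{k_i-j}}+\frac{1}{m_d^{k_i}(m_d-m_0)},
\]
which is nothing but the geometric-sum identity you already used, read backwards: the ``genuine nested sums'' it produces are exactly the summands you started from, and the residual connector term $1/(m_d^{k_i}(m_d-m_0))$ cancels the second connector term \emph{within the same index} $i$, not around the cycle. The manipulation therefore returns the off-diagonal sum to itself, and no amount of summand-level partial fractions in $m_0$ and $m_d$ can do better, because the rotated exponent strings of different $i$ sit on different variables and cannot be lined up pointwise.

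What is missing is the actual heart of the proof: a \emph{transport relation} that moves the connector one step around the cycle. One must take the piece $\sum\frac{1}{m_d^{k_i}(m_d-m_0)}\prod_{\ell}m_\ell^{-a_\ell^{(i)}}$, in which $m_0$ now carries no weight, and sum/re-index over $m_0$ --- splitting the summation region according to which variables coincide --- so as to identify it with the connector sum of the \emph{next} cyclic rotation plus exactly one copy of $\zeta(k+1)$; going around the cycle, the connector sums then cancel and the $d$ surviving copies give $d\zeta(k+1)$. This is a manipulation of the summation region, not of the summand. In the present paper that step is Proposition \ref{prop:trans3}, whose proof consumes Lemmas \ref{lem:31}, \ref{lem:32} and \ref{lem:set}; the unparametrized analogue is the harmonic-sum computation in \cite{ow06}. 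Note also, for context, that this paper never proves Theorem \ref{thm:ow} itself: it is quoted from \cite{ow06}, and the paper's own Theorem \ref{thm:main2}, specialized to $\alpha=\beta=\gamma=1$, recovers it only when every $k_i\ge 2$ (the $j=k_i-2$ terms of the cyclic sum being supplied there by the $Z_{II}^{\star}$ sums), so your attempt should really be measured against the argument of \cite{ow06}, whose cyclic transport step is precisely what your proposal lacks.
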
 
Generalizing this theorem, Igarashi computed similar cyclic sums with two parameters. Define
\[Z_{I}^{\star}(k_{1},\ldots,k_{d};\alpha,\beta)\coloneqq\sum_{0\le m_{1}\le\cdots\le m_{d}}\frac{(\alpha)_{m_{1}}}{m_{1}!}\frac{m_{d}!}{(\alpha)_{m_{d}}}\frac{1}{(m_{1}+\beta)^{k_{1}}}\prod_{i=2}^{d}\frac{1}{(m_{i}+\alpha)(m_{i}+\beta)^{k_{i}-1}}\]
and
\[Z(a\mid b;\alpha,\beta)\coloneqq\sum_{n=0}^{\infty}\frac{1}{(n+\alpha)^{a}(n+\beta)^{b}},\]
for positive integers $d,k_{1},\ldots,k_{d}$ ($k_{d}\neq 1$), integers $a$ and $b$ ($a+b\ge 2$), complex numbers $\alpha,\beta$ satisfying $\Re(\alpha)>0$ and $\beta\notin\bbZ_{\le 0}$.
\begin{theorem}[{\cite[Theorem 1 (ii)]{igarashi22}}]\label{thm:igarashi}
    Let $k_{1},\ldots,k_{d}$ be positive integers and assume $k\coloneqq k_{1}+\cdots+k_{d}\neq d$.
    Then we have
    \begin{equation}\label{eq:igarashi}
        \sum_{i=1}^{d}\sum_{j=0}^{k_{i}-2}Z_{I}^{\star}(j+1,k_{i+1},\ldots,k_{d},k_{1},\ldots,k_{i-1},k_{i}-j)=(k-d)Z(d\mid k-d+1;\alpha,\beta)+dZ(d+1\mid k-d;\alpha,\beta).
    \end{equation}
\end{theorem}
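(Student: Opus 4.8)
The plan is to mirror the structure of Ohno--Wakabayashi's argument, organizing everything around the two boundary variables created by the cyclic split, and to show that the \emph{diagonal} part of the resulting nested sum produces the first term on the right-hand side of \eqref{eq:igarashi} while the \emph{off-diagonal} part produces the second. The guiding heuristic is that at $\alpha=\beta$ both $Z$-terms degenerate to $\sum_{n\ge0}(n+\alpha)^{-(k+1)}$, so the right-hand side collapses to $k\sum_{n\ge0}(n+\alpha)^{-(k+1)}$, the Hurwitz analogue of the $k\zeta(k+1)$ in Theorem \ref{thm:ow}; the parameters $\alpha,\beta$ merely split that single sum into a ``diagonal'' piece of coefficient $k-d$ and an ``off-diagonal'' piece of coefficient $d$.

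First I would expand each summand $Z_{I}^{\star}(j+1,k_{i+1},\ldots,k_{i-1},k_{i}-j)$ as an explicit $(d+1)$-fold nested series over $0\le n_{1}\le\cdots\le n_{d+1}$. By construction the split part $k_{i}$ occupies the two extreme positions: the front argument $j+1$ sits at $n_{1}$, contributing $(n_{1}+\beta)^{-(j+1)}$ together with $(\alpha)_{n_1}/n_1!$, and the back argument $k_{i}-j$ sits at $n_{d+1}$, contributing $(n_{d+1}+\alpha)^{-1}(n_{d+1}+\beta)^{-(k_i-j-1)}$ together with $n_{d+1}!/(\alpha)_{n_{d+1}}$. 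Thus $j$ appears only through $(n_1+\beta)^{-(j+1)}(n_{d+1}+\beta)^{-(k_i-1-j)}$, and I would carry out the inner sum by the finite geometric identity
\[\sum_{j=0}^{k_i-2}\frac{1}{(n_1+\beta)^{j+1}(n_{d+1}+\beta)^{k_i-1-j}}=\frac{1}{n_{d+1}-n_1}\left(\frac{1}{(n_1+\beta)^{k_i-1}}-\frac{1}{(n_{d+1}+\beta)^{k_i-1}}\right),\]
valid for $n_1<n_{d+1}$, the value on the diagonal $n_1=n_{d+1}=n$ being $(k_i-1)(n+\beta)^{-k_i}$.

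Next I would split the nested sum according to whether $n_1=n_{d+1}$ (which forces all $n_\ell$ equal) or $n_1<n_{d+1}$. On the full diagonal the boundary factors cancel, $(\alpha)_n/n!\cdot n!/(\alpha)_n=1$, each of the positions $2,\ldots,d+1$ contributes one factor $(n+\alpha)^{-1}$, and the total $\beta$-exponent collapses to $k_i+\bigl((k-k_i)-(d-1)\bigr)=k-d+1$, independent of $i$ and $j$. Hence each diagonal term equals $(k_i-1)(n+\alpha)^{-d}(n+\beta)^{-(k-d+1)}$, and since $\sum_{i=1}^{d}(k_i-1)=k-d$, the diagonal contribution is exactly $(k-d)Z(d\mid k-d+1;\alpha,\beta)$, the first term of \eqref{eq:igarashi}.

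It remains to show that the off-diagonal part sums to $dZ(d+1\mid k-d;\alpha,\beta)$, and this is where I expect the real difficulty. Here the boundary factors do not cancel; instead I would exploit that $\frac{(\alpha)_{n_1}}{n_1!}\frac{n_{d+1}!}{(\alpha)_{n_{d+1}}}=\prod_{\ell=1}^{d}\frac{(\alpha)_{n_\ell}/n_\ell!}{(\alpha)_{n_{\ell+1}}/n_{\ell+1}!}$ is a telescoping product of ``connectors'' between consecutive nested variables, together with the Beta-integral evaluations
\[\frac{n!}{(\alpha)_{n+1}}=B(n+1,\alpha),\qquad \frac{(\alpha)_{m}}{m!}\frac{n!}{(\alpha)_{n}}\frac{1}{n-m}=\binom{n}{m}B(\alpha+m,n-m)\quad(m<n),\]
which convert the factor $1/(n_{d+1}-n_1)$ produced by the geometric sum into a telescopable object. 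The plan would be to set up a connected sum with the above connector, prove transport relations that move the $\beta$-powers one unit at a time, and sum over the $d$ cyclic rotations so that the boundary differences $\frac{1}{(n_1+\beta)^{k_i-1}}-\frac{1}{(n_{d+1}+\beta)^{k_i-1}}$ telescope, collapsing the $(d+1)$-fold sum to a single-index sum in which the $\alpha$-exponent is raised from $d$ to $d+1$ and the $\beta$-exponent is $k-d$, leaving precisely $d$ surviving copies. The main obstacle is controlling this global telescoping for arbitrary $d$ and arbitrary middle parts $b_\ell$, which carry their own $\beta$-powers; I would try induction on $d$, peeling off one intermediate variable at a time, with the base case $d=1$ reducing to an Euler-type evaluation, while checking convergence throughout by means of $\Re(\alpha)>0$, $\beta\notin\bbZ_{\le0}$ and $k\neq d$.
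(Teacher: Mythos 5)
Your reduction of the left-hand side is correct as far as it goes: expanding each $Z_I^{\star}$ as a sum over $0\le n_1\le\cdots\le n_{d+1}$, performing the geometric sum over $j$, and evaluating the full diagonal $n_1=\cdots=n_{d+1}$ does rigorously produce $(k-d)Z(d\mid k-d+1;\alpha,\beta)$, since $\sum_{i=1}^{d}(k_i-1)=k-d$. But everything after that is a plan, not a proof, and it is precisely the part that constitutes the theorem. You assert that the off-diagonal portion ($n_1<n_{d+1}$) of the cyclic sum equals $dZ(d+1\mid k-d;\alpha,\beta)$, and for this you offer ``set up a connected sum\ldots prove transport relations\ldots sum over the $d$ cyclic rotations so that the boundary differences telescope,'' ending with ``I would try induction on $d$.'' The telescoping you describe does not happen term by term: in rotation $i$ the piece carrying $(n_{d+1}+\beta)^{-(k_i-1)}$ has its weight attached to the right endpoint, together with the factor $(n_{d+1}+\alpha)^{-1}$ and the factor $n_{d+1}!/(\alpha)_{n_{d+1}}$, whereas the piece of rotation $i+1$ carrying $(n_1+\beta)^{-(k_{i+1}-1)}$ has its weight at the left endpoint with the factor $(\alpha)_{n_1}/n_1!$; these two nested sums are not equal, and their difference --- which must be computed, not gestured at --- is exactly what is supposed to generate the $d$ surviving copies of $Z(d+1\mid k-d;\alpha,\beta)$. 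The two Beta-integral identities you quote are true but are never invoked in any actual step, so they do not close this gap. As it stands you have proved only the easy half of the identity.

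Note also that the paper itself does not prove Theorem \ref{thm:igarashi}; it quotes it from Igarashi, and only rederives its maximal-height case (all $k_i\ge 2$) by setting $\beta=\gamma$ in Theorem \ref{thm:main} and using $\alpha\sum_{m=0}^{n}(\alpha)_m/m!=(\alpha+n)(\alpha)_n/n!$ to convert $Z_{II}^{\star}$ into $Z_I^{\star}$. What your sketch is missing is the two-parameter analogue of exactly the machinery the paper builds for three parameters: the connected sums $H$ with connector $1/(m_d-m_1)$, the single-sum evaluations of Lemmas \ref{lem:31} and \ref{lem:32} (which produce non-obvious correction terms, e.g.\ the sums multiplied by $t$, with no counterpart in your outline), and the transport relations of Propositions \ref{prop:trans1} and \ref{prop:trans3}, whose boundary terms are where the right-hand side $Z$-values actually arise, one per cyclic rotation. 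Carrying out such transport relations for the connector $\frac{(\alpha)_m}{m!}\frac{n!}{(\alpha)_n}\frac{1}{n-m}$ --- including the convergence checks needed once the difference $\frac{1}{(n_1+\beta)^{k_i-1}}-\frac{1}{(n_{d+1}+\beta)^{k_i-1}}$ is split into separately summed pieces --- is the substance of the proof, and it is absent.
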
 
\begin{remark}
    \begin{enumerate}
        \item Theorem \ref{thm:igarashi} recovers Theorem \ref{thm:ow} by putting $\alpha=\beta=1$.
        \item Igarashi \cite[Theorem 1.1 (ii)]{igarashi11} obtained the special ($\alpha=\beta$) case of Theorem \ref{thm:igarashi} before he found it.
    \end{enumerate}
\end{remark}
In his paper \cite{igarashi22}, Igarashi proposed three problems. One of them \cite[(C3)]{igarashi22} asks us if there is a nice generalization of Theorem \ref{thm:igarashi} to the following series with three parameters:
\[Z_{I}^{\star}(k_{1},\ldots,k_{d};\alpha,\beta,\gamma)\coloneqq\sum_{0\le m_{1}\le\cdots\le m_{d}}\frac{(\alpha)_{m_{1}}(\beta)_{m_{1}}}{(\gamma)_{m_{1}}m_{1}!}\frac{(\gamma)_{m_{d}}m_{d}!}{(\alpha)_{m_{d}}(\beta)_{m_{d}}}\frac{1}{(m_{1}+\gamma)^{k_{1}}}\prod_{i=2}^{d}\frac{1}{(m_{i}+\alpha)(m_{i}+\beta)(m_{i}+\gamma)^{k_{i}-2}},\]
where $k_{1},\ldots,k_{d}$ are with the same condition as above and $\alpha,\beta$ and $\gamma$ are complex numbers satisfying $\Re(\alpha+\beta-\gamma)>0$ and $\alpha,\beta,\gamma\notin\bbZ_{\le 0}$.\\

The purpose of this article is to give a partially affirmative answer for it.
We use the following auxiliary sums:
\[Z_{II}^{\star}(k_{1},\ldots,k_{d};\alpha,\beta,\gamma)\coloneqq\sum_{0\le m_{0}\le m_{1}\le\cdots\le m_{d}}\frac{(\alpha)_{m_{0}}(\beta)_{m_{0}}}{(\gamma)_{m_{0}}m_{0}!}\frac{(\gamma)_{m_{d}}m_{d}!}{(\alpha)_{m_{d}}(\beta)_{m_{d}}}\prod_{i=1}^{d}\frac{1}{(m_{i}+\alpha)(m_{i}+\beta)(m_{i}+\gamma)^{k_{i}-2}}\]
and
\[Z(a\mid b\mid c;\alpha,\beta,\gamma)\coloneqq\sum_{n=0}^{\infty}\frac{1}{(n+\alpha)^{a}(n+\beta)^{b}(n+\gamma)^{c}},\]
for $\alpha,\beta,\gamma$ with the same condition, $k_{1},\ldots,k_{d}\in\bbZ_{\ge 2}$ and integers $a,b$ and $c$ satisfying $a+b+c\ge 2$.
\begin{theorem}[$=$ Theorem \ref{thm:main2}]\label{thm:main}
    Let $k_{1},\ldots,k_{d}$ be positive integers such that $k_{i}\ge 2$ for every $1\le i\le d$ and $\alpha,\beta,\gamma$ complex numbers satisfying $\Re(\alpha+\beta-\gamma)>0$ and $\alpha,\beta,\gamma\notin\bbZ_{\le 0}$.
    Then we have
    \begin{multline}
        \begin{split}
            \sum_{i=1}^{d}\sum_{j=0}^{k_{i}-3}Z_{I}^{\star}(j+1,k_{i+1},\ldots,k_{d},k_{1},\ldots,k_{i-1},k_{i}-j;\alpha,\beta,\gamma)
            +(\alpha+\beta-\gamma)\sum_{i=1}^{d}Z_{II}^{\star}(k_{i},\ldots,k_{d},k_{1},\ldots,k_{i-1},2;\alpha,\beta,\gamma)\\
            =dZ(d\mid d+1\mid k-2d;\alpha,\beta,\gamma)+dZ(d+1\mid d\mid k-2d;\alpha,\beta,\gamma)+(k-2d)Z(d\mid d\mid k-2d+1;\alpha,\beta,\gamma).
        \end{split}
    \end{multline}
\end{theorem}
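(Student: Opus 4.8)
The plan is to prove the identity by the parametric analogue of the telescoping-and-cyclic-cancellation argument underlying Theorems \ref{thm:ow} and \ref{thm:igarashi}. The engine consists of two elementary Pochhammer-shift identities. The first, which is exact, reads
\[\frac{(\gamma)_m\,m!}{(\alpha)_m(\beta)_m}\cdot\frac{1}{(m+\alpha)(m+\beta)}=\frac{(\gamma)_{m+1}\,(m+1)!}{(\alpha)_{m+1}(\beta)_{m+1}}\cdot\frac{1}{(m+1)(m+\gamma)},\]
and it lets one trade the two ``extra'' denominators $(m+\alpha)(m+\beta)$ carried by every interior factor of $Z_{I}^{\star}$ and $Z_{II}^{\star}$ for a single power of $(m+\gamma)$, at the cost of shifting the top index by one. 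The second is a contiguous (creative-telescoping) relation for the bottom weight $(\alpha)_m(\beta)_m/((\gamma)_m m!)$ whose characteristic denominator is exactly $\gamma-\alpha-\beta$; this is the source of both the factor $(\alpha+\beta-\gamma)$ and the auxiliary series $Z_{II}^{\star}$, in which the detached innermost variable $m_{0}$ records the as-yet-unsummed partial sum of the bottom Pochhammer weights. First I would verify both identities directly from $\,(m+\alpha)(m+\beta)P_m=(m+\gamma)(m+1)P_{m+1}$, where $P_m\coloneqq (\alpha)_m(\beta)_m/((\gamma)_m m!)$.

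Next I would apply these shifts inside a fixed cyclic block. For each $i$ the inner sum $\sum_{j=0}^{k_i-3}$ slides one power of $(m+\gamma)$ from the tail index to the head index, so that after applying the first shift identity the summands assemble into a telescoping difference in the head power. Summing over the rotations $i=1,\dots,d$, I would arrange the shifts so that the tail-boundary of the $i$-th term coincides with the head-boundary of the $(i+1)$-st term; then all interior multiple series cancel in pairs. This is the cyclic cancellation, and it is where the hypothesis that every $k_i\ge 2$ is used to keep the shifted indices admissible. The only terms that survive this cancellation are (a) genuine boundary terms, in which the nested sum has collapsed because an index has been pushed to an extreme, and (b) the remainder produced by the non-exact second identity, which reassembles precisely into $(\alpha+\beta-\gamma)\sum_{i}Z_{II}^{\star}(k_i,\dots,k_{i-1},2;\alpha,\beta,\gamma)$.

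It then remains to evaluate the surviving boundary terms. Here all the nested constraints $m_{1}\le\cdots\le m_{d}$ degenerate to the diagonal $m_{1}=\cdots=m_{d}=n$, and every interior factor contributes $1/((n+\alpha)(n+\beta)(n+\gamma)^{k_i-2})$; the product over $i$ collapses to $1/((n+\alpha)^{d}(n+\beta)^{d}(n+\gamma)^{k-2d})$, which is the summand of $Z(d\mid d\mid k-2d;\alpha,\beta,\gamma)$, of weight $k$. The one extra power needed to reach weight $k+1$ is supplied by the boundary, and summing over the place where it can land distributes it among the $d$ factors $(n+\alpha)$, the $d$ factors $(n+\beta)$, and the $k-2d$ factors $(n+\gamma)$, producing exactly the three terms $dZ(d+1\mid d\mid k-2d)$, $dZ(d\mid d+1\mid k-2d)$ and $(k-2d)Z(d\mid d\mid k-2d+1)$ on the right-hand side; as a consistency check, the multiplicities satisfy $d+d+(k-2d)=k$ and each term has total weight $k+1$.

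The main obstacle is the bookkeeping of the cyclic cancellation together with the precise accounting of the defect: one must show that the remainder left by the inexact contiguous relation is neither more nor less than $(\alpha+\beta-\gamma)\sum_{i}Z_{II}^{\star}$, with no stray interior terms left over. I expect this to require organizing all $Z_{I}^{\star}$ and $Z_{II}^{\star}$ summands under a common index of summation and tracking the shifts $m\mapsto m+1$ so that the telescoping boundaries line up exactly across consecutive rotations. Throughout, the interchange and convergence of the various multiple series are guaranteed by the standing hypothesis $\Re(\alpha+\beta-\gamma)>0$, which controls the growth of the Pochhammer ratios, and by $\alpha,\beta,\gamma\notin\bbZ_{\le 0}$, which keeps every denominator nonzero.
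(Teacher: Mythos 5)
Your overall strategy---transport a weight step by step and let the rotated pieces cancel cyclically---is indeed the strategy of the paper, but your plan omits the object that actually does the telescoping, and this is a genuine gap rather than bookkeeping. The sums $Z_{I}^{\star}(j+1,\bk,k-j)$ themselves do not assemble into a telescoping difference under your Pochhammer shift (your first identity is just the paper's recurrence \eqref{eq:rec3}). What telescopes is the auxiliary connected sum of Hoffman--Ohno type,
\[H(k_{1},\ldots,k_{d})=\sum_{(m_{1},\bm,m_{d})\in S_{1,d}(\le^{d-1})}\frac{g_{m_{1}}^{0,0}}{g_{m_{d}}^{0,0}}\frac{1}{(m_{1}+\gamma)^{k_{1}-1}}\Pi\binom{\bm,m_{d}}{k_{2},\ldots,k_{d}}\frac{1}{m_{d}-m_{1}},\]
whose connector $1/(m_{d}-m_{1})$ ties the head and tail variables together. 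Each $Z_{I}^{\star}(j+1,\bk,k-j)$ appears only as the defect of one telescoping step $H(j+1,\bk,k-j)\to H(j+2,\bk,k-j-1)$ (Proposition \ref{prop:trans1}), and the pairwise cancellation across consecutive rotations that you invoke is a cancellation of $H$-values, made possible by the rotation step $H(k-1,\bk,2)=H(1,k,\bk)-tZ_{II}^{\star}(k,\bk,2)+\cdots$ (Proposition \ref{prop:trans3}). Without introducing $H$, or some equivalent connected sum, there is nothing for your ``head power'' to telescope against, and no identity whose repeated use produces the cyclic cancellation.

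The second concrete problem is the engine you propose for producing $(\alpha+\beta-\gamma)\sum_{i}Z_{II}^{\star}$: a contiguous relation for $P_{m}=(\alpha)_{m}(\beta)_{m}/((\gamma)_{m}m!)$ ``whose characteristic denominator is exactly $\gamma-\alpha-\beta$.'' No such relation exists in the form you assert. From $(m+\alpha)(m+\beta)P_{m}=(m+\gamma)(m+1)P_{m+1}$ one gets, for instance, $(m+1)P_{m+1}-mP_{m}=P_{m}\,\dfrac{\alpha\beta+(\alpha+\beta-\gamma)m}{\gamma+m}$, which is not a constant multiple of $P_{m}$; precisely for this reason the partial sums $\sum_{m\le n}P_{m}$ admit no closed form when $\gamma\neq\beta$ (contrast the $\beta=\gamma$ identity quoted in the introduction), and this is why $Z_{II}^{\star}$ must be kept as a genuine extra summation variable rather than appearing as a ``remainder'' of a telescoping weight. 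In the paper the factor $t=\alpha+\beta-\gamma$ arises from a different mechanism: inside the evaluation of the single sum $F_{m,n}=g_{m}^{1,1}\sum_{n<l}\frac{1}{(l-m)g_{l}^{1,1}}$, the two partial-fraction decompositions \eqref{eq:dec1} and \eqref{eq:dec2} produce the linear terms $\alpha+\beta+m+l$ and $\gamma+m+l$, whose difference is $t$ (Lemmas \ref{lem:31} and \ref{lem:32}). Those single-sum lemmas, and the rotation step they enable, are also where two of the three right-hand side terms come from (via $(\alpha+\beta+2l)g_{l}^{0,0}/g_{l}^{1,1}=\frac{1}{l+\alpha}+\frac{1}{l+\beta}$ on the diagonal), while the third comes from the telescoping boundary in Corollary \ref{cor:trans2}; your diagonal ``weight-distribution'' argument for the right-hand side checks the multiplicities $d+d+(k-2d)=k$ but is not a derivation. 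As it stands, the proposal cannot be completed without supplying exactly the machinery it omits.
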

\begin{remark}
    The condition of $(k_{1},\ldots,k_{d})$ in Theorem \ref{thm:main} is sometimes referred as \emph{having maximal height} in the context of multiple zeta values.
\end{remark}
Our main theorem can deduces a ``maximal height'' case of Igarashi's formula \eqref{eq:igarashi} as follows: put $\beta=\gamma$ in Theorem \ref{thm:main}.
Then the right-hand side becomes
\begin{multline}
    dZ(d\mid d+1\mid k-2d;\alpha,\beta,\beta)+dZ(d+1\mid d\mid k-2d;\alpha,\beta,\beta)+(k-2d)Z(d\mid d\mid k-2d+1;\alpha,\beta,\beta)\\
    =(k-d)Z(d\mid k-d+1;\alpha,\beta)+dZ(d+1\mid k-d;\alpha,\beta),
\end{multline}
and for $\bk=(k_{1},\ldots,k_{d})\in\bbZ_{\ge 2}^{d}$ and a positive integer $k$, we have
\[\alpha Z_{II}^{\star}(k+1,\bk;\alpha,\beta,\beta)=Z_{I}^{\star}(k,\bk;\alpha,\beta),\]
since the equality
\[\alpha\sum_{m=0}^{n}\frac{(\alpha)_{m}}{m!}=(\alpha+n)\frac{(\alpha)_{n}}{n!}\]
holds for $n\ge 0$.
\section*{Acknowledgements}
The second author is grateful to Takumi Maesaka for his helpful advices. This work is supported by Academic Research Club of KADOKAWA DWANGO Educational Institute.
\section{A proof}
\subsection{Computation on single sums}
Hereafter we fix a tuple $(\alpha,\beta,\gamma)$ with the condition in Theorem \ref{thm:main} and drop its information from notations appearing here. Denote $\alpha+\beta-\gamma$ by $t$.
We put
\begin{align}
    g_{m}^{\ep,\ep'}&\coloneqq\frac{(\alpha)_{m+\ep}(\beta)_{m+\ep'}}{(\gamma)_{m}m!}\\
\end{align}
for a non-negative integer $m$ and $\ep,\ep'\in\{0,1\}$.
We consider $g_{-1}^{\ep,\ep'}$ as $0$.
Note that reccurence relations
\begin{equation}\label{eq:rec1}
    \frac{g_{m}^{1,0}}{\alpha+m}=\frac{g_{m}^{0,1}}{\beta+m}=\frac{g_{m}^{1,1}}{(\alpha+m)(\beta+m)}=g_{m}^{0,0},
\end{equation}
\begin{equation}\label{eq:rec2}
    g_{m+1}^{\ep,\ep'}=\frac{(\alpha+m+\ep)(\beta+m+\ep')}{(\gamma+m)(m+1)}g_{m}^{\ep,\ep'}
\end{equation}
and
\begin{equation}\label{eq:rec3}
    g_{m+1}^{0,0}=\frac{g_{m}^{1,1}}{(m+\gamma)(m+1)}
\end{equation}
hold.
Moreover, we put
\[F_{m,n}\coloneqq g_{m}^{1,1}\sum_{n<l}\frac{1}{l-m}\frac{1}{g_{l}^{1,1}}\]
for non-negative integers $m$ and $n$ satisfying $m\le n$.
\begin{lemma}\label{lem:31}
For non-negative integers $m\le n$, we have
\[        
F_{m,n}=\sum_{h=0}^{m}g_{h}^{0,0}\left(\frac{1}{(n+1-h)g_{n+1}^{0,0}}-\sum_{n<l}\frac{t}{g_{l}^{1,1}}\right)
\]
\end{lemma}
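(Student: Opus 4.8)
The plan is to reduce the lemma to a first-difference (in $m$) identity and prove that by an inner telescoping in $l$. Adopting the convention $F_{-1,n}=0$ and abbreviating $Q\coloneqq\sum_{n<l}1/g_{l}^{1,1}$, I would show that for every $0\le m\le n$
\[
F_{m,n}-F_{m-1,n}=g_{m}^{0,0}\left(\frac{1}{(n+1-m)g_{n+1}^{0,0}}-tQ\right).
\]
This is precisely the increment obtained when the summation index $h$ on the right-hand side of the lemma is raised from $m-1$ to $m$, so summing it over $m$ (an outer telescoping, with the empty sum at $m=-1$) yields the stated closed form. The whole argument is thus a double telescoping, the outer one in $m$ being immediate once the inner one is established.

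For the inner telescoping, note that by definition
\[
F_{m,n}-F_{m-1,n}=\sum_{n<l}w_{l},\qquad w_{l}\coloneqq\frac{1}{g_{l}^{1,1}}\left(\frac{g_{m}^{1,1}}{l-m}-\frac{g_{m-1}^{1,1}}{l-m+1}\right),
\]
and I would guess the antiderivative $\phi_{l}\coloneqq g_{m}^{0,0}/\bigl((l-m)g_{l}^{0,0}\bigr)$, aiming for the pointwise identity
\[
w_{l}=\phi_{l}-\phi_{l+1}-t\,g_{m}^{0,0}\,\frac{1}{g_{l}^{1,1}}.
\]
The shape of $\phi_{l}$ is forced by the boundary term $1/\bigl((n+1-h)g_{n+1}^{0,0}\bigr)$ in the lemma, since the telescope will leave exactly $\phi_{n+1}$ (the $h=m$ contribution). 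To verify the identity I would clear all Pochhammer data using \eqref{eq:rec1} and \eqref{eq:rec2}: namely $g_{l}^{1,1}=(\alpha+l)(\beta+l)g_{l}^{0,0}$, $g_{m}^{1,1}=(\alpha+m)(\beta+m)g_{m}^{0,0}$, $g_{m-1}^{1,1}=(\gamma+m-1)m\,g_{m}^{0,0}$, and $1/g_{l+1}^{0,0}=(\gamma+l)(l+1)/\bigl((\alpha+l)(\beta+l)g_{l}^{0,0}\bigr)$. After factoring out the common quantity $g_{m}^{0,0}/\bigl((\alpha+l)(\beta+l)g_{l}^{0,0}\bigr)$ from both sides, the claim collapses to the elementary rational identity
\[
\frac{(\alpha+m)(\beta+m)}{l-m}-\frac{(\gamma+m-1)m}{l-m+1}=\frac{(\alpha+l)(\beta+l)}{l-m}-\frac{(\gamma+l)(l+1)}{l-m+1}-t.
\]

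The crux is this last identity, and it is exactly where the definition $t=\alpha+\beta-\gamma$ enters. Collecting the two $1/(l-m)$ numerators gives $(\alpha+m)(\beta+m)-(\alpha+l)(\beta+l)=(m-l)(m+l+\alpha+\beta)$, which cancels the factor $l-m$ and contributes $-(m+l+\alpha+\beta)$; collecting the two $1/(l-m+1)$ numerators gives $(\gamma+m-1)m-(\gamma+l)(l+1)=-(l-m+1)(\gamma+m+l)$, contributing $+(\gamma+m+l)$. Their sum together with $t$ is $-(\alpha+\beta)+\gamma+t$, which vanishes precisely because $t=\alpha+\beta-\gamma$. Summing the pointwise identity over $l>n$, the telescoping part yields $\phi_{n+1}-\lim_{L\to\infty}\phi_{L}$, while the growth estimate $g_{l}^{0,0}\asymp l^{t-1}$ coming from $\Re(\alpha+\beta-\gamma)>0$ makes $\phi_{L}=O(L^{-\Re(t)})\to0$ and simultaneously guarantees the absolute convergence of $Q$ and of the defining series of $F_{m,n}$. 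This produces the first-difference identity above, and the outer telescoping in $m$ finishes the proof. I expect the only genuine obstacle to be discovering $\phi_{l}$ together with the exact defect $t\,g_{m}^{0,0}/g_{l}^{1,1}$; once these are in hand the verification is the routine polynomial cancellation displayed above, and the analytic input is confined to the vanishing of the boundary term at infinity, supplied by $\Re(\alpha+\beta-\gamma)>0$.
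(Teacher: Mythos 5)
Your proof is correct and follows essentially the same route as the paper: the paper also reduces the lemma to the first-difference identity $F_{m,n}-F_{m-1,n}=g_{m}^{0,0}\bigl(\tfrac{1}{(n+1-m)g_{n+1}^{0,0}}-\sum_{n<l}t/g_{l}^{1,1}\bigr)$ and sums over $m$, and your single rational identity in $l$ is exactly the combination of the paper's two partial-fraction decompositions \eqref{eq:dec1} and \eqref{eq:dec2} together with the recurrences \eqref{eq:rec1} and \eqref{eq:rec3}. The only organizational difference is that you verify the identity pointwise in $l$ and telescope partial sums before letting $L\to\infty$, which incidentally treats convergence slightly more carefully than the paper's series-level manipulation (whose intermediate split series diverge when $0<\Re(t)\le 1$), but the mathematical content is the same.
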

\begin{proof}
By definition and \eqref{eq:rec1}, we have
\[\frac{1}{g_{m}^{1,1}}F_{m,n}
    =\sum_{n<l}\frac{1}{(l-m)(\alpha+l)(\beta+l)g_{l}^{0,0}}.\]
    The partial fraction decomposition
    \begin{equation}\label{eq:dec1}
        \frac{1}{(l-m)(\alpha+l)(\beta+l)}=\frac{1}{(\alpha+m)(\beta+m)}\left(\frac{1}{l-m}-\frac{\alpha+\beta+m+l}{(\alpha+l)(\beta+l)}\right),
    \end{equation}
    deduces
    \[\frac{(\alpha+m)(\beta+m)}{g_{m}^{1,1}}F_{m,n}
        =\sum_{n<l}\frac{1}{l-m}\frac{1}{g_{l}^{0,0}}-\sum_{n<l}\frac{\alpha+\beta+m+l}{(\alpha+l)(\beta+l)g_{l}^{0,0}}\]
        and thus \eqref{eq:rec1} shows 
        \begin{align}
            \begin{split}\label{eq:31_key}
            \frac{(\alpha+m)(\beta+m)}{g_{m}^{1,1}}F_{m,n}+\sum_{n<l}\frac{\alpha+\beta+m+l}{g_{l}^{1,1}}
        &=\sum_{n<l}\frac{1}{l-m}\frac{1}{g_{l}^{0,0}}\\
        &=\frac{1}{(n+1-m)g_{n+1}^{0,0}}+\sum_{n<l}\frac{1}{l+1-m}\frac{1}{g_{l+1}^{0,0}}
            \end{split}
        \end{align}
        Denote by $A$ the second sum on the right-hand side.
        Using the reccurence relation \eqref{eq:rec3}, we obtain
        \[
            A=\sum_{n<l}\frac{(l+\gamma)(l+1)}{l+1-m}\frac{1}{g_{l}^{1,1}}
        \]
        Then we can apply the decomposition
        \begin{equation}\label{eq:dec2}
            \frac{(l+\gamma)(l+1)}{l+1-m}=l+\gamma+m+\frac{(\gamma+m-1)m}{l+1-m}
        \end{equation}
        and get
        \[A=\sum_{n<l}\left(l+\gamma+m+\frac{(\gamma+m-1)m}{l+1-m}\right)\frac{1}{g_{l}^{1,1}}=\sum_{n\le l}\frac{l+\gamma+m}{g_{l}^{1,1}}+\frac{(\gamma+m-1)m}{g_{m-1}^{1,1}}F_{m-1,n}.\]
        Combining this and \eqref{eq:31_key}, it follows that
        \[\frac{(\alpha+m)(\beta+m)}{g_{m}^{1,1}}F_{m,n}-\frac{(\gamma+m-1)m}{g_{m-1}^{1,1}}F_{m-1,n}=\frac{1}{(n+1-m)g_{n+1}^{0,0}}-\sum_{n<l}\frac{t}{g_{l}^{1,1}}\]
        holds and the left-hand side is equal to $(F_{m,n}-F_{m-1,n})/g_{m}^{0,0}$ from the reccurence equations \eqref{eq:rec1} and \eqref{eq:rec3}.
        Therefore, multiplying $g_{m}^{0,0}$ on both sides and then summing them up over $m$ completes the proof.
\end{proof}
\begin{lemma}\label{lem:32}
    For non-negative integers $m$ and $n$, if $m\le n$ we have
    \[F_{m,n}-(\alpha+\beta+m+n)\frac{g_{m}^{0,0}}{g_{n}^{1,1}}=\sum_{h=0}^{m-1}\frac{1}{n-h}\frac{g_{h}^{0,0}}{g_{n}^{0,0}}-t\sum_{h=0}^{m}g_{h}^{0,0}\sum_{n\le l}\frac{1}{g_{l}^{1,1}}.\]
\end{lemma}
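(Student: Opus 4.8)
The plan is to deduce Lemma \ref{lem:32} from Lemma \ref{lem:31} by absorbing the endpoint $l=n$ into the tail sum and then checking a purely finite identity. Comparing the two statements, the only structural difference between them is that Lemma \ref{lem:31} carries the factor $\sum_{n<l}t/g_{l}^{1,1}$ whereas Lemma \ref{lem:32} carries $\sum_{n\le l}t/g_{l}^{1,1}$; since $\sum_{n\le l}1/g_{l}^{1,1}=1/g_{n}^{1,1}+\sum_{n<l}1/g_{l}^{1,1}$, equating the two expressions for $F_{m,n}$ makes the infinite sums cancel. First I would substitute the formula of Lemma \ref{lem:31} for $F_{m,n}$ into the claimed equation, so that the whole statement collapses to the finite identity
\[
\sum_{h=0}^{m}\frac{g_{h}^{0,0}}{(n+1-h)g_{n+1}^{0,0}}+\frac{t}{g_{n}^{1,1}}\sum_{h=0}^{m}g_{h}^{0,0}=(\alpha+\beta+m+n)\frac{g_{m}^{0,0}}{g_{n}^{1,1}}+\sum_{h=0}^{m-1}\frac{g_{h}^{0,0}}{(n-h)g_{n}^{0,0}}.
\]

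Next I would clear the factor $g_{n}^{1,1}$ from all denominators. From the recurrences \eqref{eq:rec1} and \eqref{eq:rec3} one has $g_{n}^{1,1}/g_{n+1}^{0,0}=(n+\gamma)(n+1)$ and $g_{n}^{1,1}/g_{n}^{0,0}=(\alpha+n)(\beta+n)$, so multiplying through by $g_{n}^{1,1}$ turns the identity into the finite statement
\[
(n+\gamma)(n+1)\sum_{h=0}^{m}\frac{g_{h}^{0,0}}{n+1-h}+t\sum_{h=0}^{m}g_{h}^{0,0}=(\alpha+\beta+m+n)g_{m}^{0,0}+(\alpha+n)(\beta+n)\sum_{h=0}^{m-1}\frac{g_{h}^{0,0}}{n-h},
\]
which no longer involves $F$ or any infinite series.

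I would then prove this identity by induction on $m$, for $0\le m\le n$ so that every denominator $n+1-h$ and $n-h$ is positive. The base case $m=0$ is immediate from $g_{0}^{0,0}=1$ and $t=\alpha+\beta-\gamma$, both sides equalling $\alpha+\beta+n$. For the inductive step I would subtract the identity at $m-1$ from the one at $m$; after dividing by $g_{m-1}^{0,0}$ and substituting $g_{m}^{0,0}=\frac{(\alpha+m-1)(\beta+m-1)}{(\gamma+m-1)m}g_{m-1}^{0,0}$ from \eqref{eq:rec2}, the telescoped increment reduces to a single rational identity in $\alpha,\beta,\gamma,m,n$. The simplification that makes it collapse is $(\alpha+\beta+m+n)-t=m+n+\gamma$; after clearing the common denominator $(\gamma+m-1)m(n+1-m)$ it is a polynomial identity which I expect to hold after direct expansion.

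The main obstacle is not a single deep idea but the bookkeeping: one must match the shifted index ranges $\sum_{h=0}^{m}$ and $\sum_{h=0}^{m-1}$ correctly when forming the difference of the two instances, and keep track of which recurrence clears which denominator. I expect the verification of the final polynomial identity—equivalently, the check that the per-$m$ increments of the two sides agree—to be the most calculation-heavy point, though no new tool beyond \eqref{eq:rec1}, \eqref{eq:rec2} and \eqref{eq:rec3} is needed.
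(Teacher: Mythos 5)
Your proposal is correct, but it proves the lemma by a genuinely different route than the paper. You treat Lemma \ref{lem:31} as a black box: substituting its formula for $F_{m,n}$ into the claimed identity, the infinite tails $t\sum_{h\le m}g_h^{0,0}\sum_{n<l}1/g_l^{1,1}$ cancel on both sides, and after clearing $g_n^{1,1}$ via $g_n^{1,1}=(n+\gamma)(n+1)\,g_{n+1}^{0,0}=(\alpha+n)(\beta+n)\,g_n^{0,0}$ the lemma collapses to the finite identity
\[
(n+\gamma)(n+1)\sum_{h=0}^{m}\frac{g_{h}^{0,0}}{n+1-h}+t\sum_{h=0}^{m}g_{h}^{0,0}=(\alpha+\beta+m+n)g_{m}^{0,0}+(\alpha+n)(\beta+n)\sum_{h=0}^{m-1}\frac{g_{h}^{0,0}}{n-h},
\]
which you then verify by induction on $m$. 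I checked the induction: the base case gives $\alpha+\beta+n$ on both sides, and in the step the increment reduces, after the substitution $g_m^{0,0}=\tfrac{(\alpha+m-1)(\beta+m-1)}{(\gamma+m-1)m}g_{m-1}^{0,0}$ and multiplication by $n+1-m$, to $(\alpha+m-1)(\beta+m-1)=(\alpha+n)(\beta+n)-(\alpha+\beta+m+n-1)(n+1-m)$, which indeed holds identically. The paper instead proves Lemma \ref{lem:32} directly: it re-expands $F_{m,n}$ with the partial fraction decompositions \eqref{eq:dec1} and \eqref{eq:dec2}, derives the recurrence \eqref{eq:32_key} relating $F_{m,n}$ to $F_{m-1,n}$ (still carrying the infinite series), applies \eqref{eq:dec1} once more to produce telescoping terms, and sums over $m$. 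Your approach buys economy and a clean separation of concerns — all analytic content (convergence, tail manipulation) lives in Lemma \ref{lem:31}, and Lemma \ref{lem:32} becomes a purely algebraic consequence checkable by polynomial expansion; it also serves as a consistency check between the two lemmas. The paper's approach buys structural transparency: the partial fraction decompositions do all the work without any brute-force expansion, and the proof runs in visible parallel with that of Lemma \ref{lem:31}, reusing the same machinery. One caveat for your write-up: you should say explicitly that the cancellation of the infinite sums is legitimate because $\sum_{l>n}1/g_l^{1,1}$ converges under the standing hypothesis $\Re(\alpha+\beta-\gamma)>0$, so both sides are finite quantities and the subtraction is permitted.
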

\begin{proof}
    From \eqref{eq:31_key}, \eqref{eq:rec1} and \eqref{eq:dec1}, we see that
    \begin{align}
        F_{m,n}
        &=g_{m}^{1,1}\sum_{n<l}\frac{1}{l-m}\frac{1}{g_{l}^{1,1}}\\
        &=g_{m}^{1,1}\sum_{n<l}\frac{1}{(l-m)(\alpha+l)(\beta+l)}\frac{1}{g_{l}^{0,0}}\\
        &=g_{m}^{0,0}\sum_{n<l}\left(\frac{1}{l-m}-\frac{\alpha+\beta+m+l}{(\alpha+l)(\beta+l)}\right)\frac{1}{g_{l}^{0,0}}
    \end{align}
    holds.
    Since the first sum on the right-hand side is computed by \eqref{eq:rec3} as
    \begin{align}
        \sum_{n<l}\frac{1}{l-m}\frac{1}{g_{l}^{0,0}}
        &=\sum_{n<l}\frac{l(l+\gamma-1)}{l-m}\frac{1}{g_{l-1}^{1,1}}\\
        &=\sum_{n<l}\left(\frac{m(\gamma+m-1)}{l-m}+l+\gamma+m-1\right)\frac{1}{g_{l-1}^{1,1}}
    \end{align}
    where we used \eqref{eq:dec2} in the second equality, we have
    \begin{align}
        \begin{split}\label{eq:32_key}
        F_{m,n}
        &=g_{m}^{0,0}\sum_{n<l}\left(\frac{m(\gamma+m-1)}{l-m}+l+\gamma+m-1\right)\frac{1}{g_{l-1}^{1,1}}-g_{m}^{0,0}\sum_{n<l}\frac{\alpha+\beta+m+l}{(\alpha+l)(\beta+l)}\frac{1}{g_{l}^{0,0}}\\
        &=g_{m-1}^{1,1}\sum_{n<l}\frac{1}{l-m}\frac{1}{g_{l-1}^{1,1}}+g_{m}^{0,0}\sum_{n\le l}\frac{l+\gamma+m}{g_{l}^{1,1}}-g_{m}^{0,0}\sum_{n<l}\frac{\alpha+\beta+m+l}{g_{l}^{1,1}}\\
        &=F_{m-1,n}+\frac{1}{n-m+1}\frac{g_{m-1}^{1,1}}{g_{n}^{1,1}}+(\alpha+\beta+m+n)\frac{g_{m}^{0,0}}{g_{n}^{1,1}}-tg_{m}^{0,0}\sum_{n\le l}\frac{1}{g_{l}^{1,1}}.
        \end{split}
    \end{align}
    Then we use the decomposition \eqref{eq:dec1} on the second term, namely,
    \begin{align}
        \frac{1}{n-m+1}\frac{g_{m-1}^{1,1}}{g_{n}^{1,1}}
        &=\frac{1}{(n-m+1)(\alpha+n)(\beta+n)}\frac{g_{m-1}^{1,1}}{g_{n}^{0,0}}\\
        &=\frac{1}{(\alpha+m-1)(\beta+m-1)}\left(\frac{1}{n-m+1}-\frac{\alpha+\beta+m+n-1}{(\alpha+n)(\beta+n)}\right)\frac{g_{m-1}^{1,1}}{g_{n}^{0,0}}\\
        &=\frac{1}{n-m+1}\frac{g_{m-1}^{0,0}}{g_{n}^{0,0}}-(\alpha+\beta+m+n-1)\frac{g_{m-1}^{1,1}}{g_{n}^{1,1}}.
    \end{align}
    Plugging this calculation in \eqref{eq:32_key}, we can make telescopic terms as
    \[
        F_{m,n}-F_{m-1,n}+(\alpha+\beta+m+n-1)\frac{g_{m-1}^{1,1}}{g_{n}^{1,1}}-(\alpha+\beta+m+n)\frac{g_{m}^{0,0}}{g_{n}^{1,1}}
        =\frac{1}{n-m+1}\frac{g_{m-1}^{0,0}}{g_{n}^{0,0}}-tg_{m}^{0,0}\sum_{n\le l}\frac{1}{g_{l}^{1,1}}.
    \]
    By taking the sum on both sides, it is shown that
    \[
        F_{m,n}-(\alpha+\beta+m+n)\frac{g_{m}^{0,0}}{g_{n}^{1,1}}=\sum_{h=0}^{m}\left(\frac{1}{n-h+1}\frac{g_{h-1}^{0,0}}{g_{n}^{0,0}}-tg_{h}^{0,0}\sum_{n\le l}\frac{1}{g_{l}^{1,1}}\right)
    \]
    holds. This deduces the desired result by shifting the variable on the first sum on the right-hand side and using $g_{-1}^{0,0}=0$.
\end{proof}
\subsection{Connected sum of Hoffman--Ohno type}
Put $X\coloneqq\{\le, =\}$ and denote by $X^{\bullet}$ the free monoid generated by $X$.
\begin{definition}
    Let $\circ_{1},\ldots,\circ_{d}$ ($d\ge 1$) be elements of $X$ and $s=(\circ_{1},\ldots,\circ_{d})$ be a non-empty word of $X^{\bullet}$.
    For integers $1\le i<j\le d+1$, we put
    \[S(s)\coloneqq\{(m_{1},\ldots,m_{d+1})\in\bbZ_{\ge 0}^{d+1}\mid m_{1}\circ_{1}\cdots\circ_{d}m_{d+1}\}\]
    and
    \[S_{i,j}(s)\coloneqq\{(m_{1},\ldots,m_{d+1})\in S(s)\mid m_{i}\neq m_{j}\}.\]
\end{definition}
The following lemma is obvious but useful assertions to prove the main theorem.
\begin{lemma}\label{lem:set}
    For an integer $d\ge 2$, we have the following.
    \begin{enumerate}
        \item\label{set1} $S_{1,d}(\le^{d-1})=S_{1,d-1}(\le^{d-1})\sqcup S_{d-1,d}(=^{d-2},\le)$.
        \item\label{set2} $S_{1,d}(\le^{d-1})=S_{2,d}(\le^{d-1})\sqcup S_{1,2}(\le,=^{d-2})$.
        \item\label{set3} $S_{1,d}(\le^{d-1})\sqcup S(=^{d-1})=S(\le^{d-1})$.
    \end{enumerate}
\end{lemma}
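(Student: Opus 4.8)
The plan is to translate every word of $X^{\bullet}$ appearing in the statement into the explicit system of inequalities it encodes, and then to read off each identity as a case split on a single comparison. Concretely, $S(\le^{d-1})$ is the set of $(m_{1},\ldots,m_{d})\in\bbZ_{\ge0}^{d}$ with $m_{1}\le\cdots\le m_{d}$, the subscripts $i,j$ in $S_{i,j}$ impose the extra constraint $m_{i}\neq m_{j}$, and replacing a $\le$ by an $=$ in the word forces the two corresponding entries to coincide. The only structural observation I will need is the \emph{collapsing property}: if $m_{1}\le\cdots\le m_{d}$ and $m_{i}=m_{j}$ for some $i<j$, then $m_{i}=m_{i+1}=\cdots=m_{j}$. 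This is what lets me rewrite an ``equal endpoints'' branch by a word carrying interior $=$ symbols.

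With this dictionary, (1) becomes the partition of $\{m_{1}\le\cdots\le m_{d},\ m_{1}<m_{d}\}$ according to whether $m_{1}<m_{d-1}$ or $m_{1}=m_{d-1}$. The first alternative is exactly $S_{1,d-1}(\le^{d-1})$, and it lies inside $S_{1,d}(\le^{d-1})$ because $m_{1}<m_{d-1}\le m_{d}$ already forces $m_{1}<m_{d}$; the second alternative collapses to $m_{1}=\cdots=m_{d-1}<m_{d}$, which is $S_{d-1,d}(=^{d-2},\le)$. Since $m_{1}<m_{d-1}$ and $m_{1}=m_{d-1}$ are mutually exclusive, the union is disjoint. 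Assertion (2) is the symmetric case split on $m_{2}<m_{d}$ versus $m_{2}=m_{d}$: the former gives $S_{2,d}(\le^{d-1})$ (again with $m_{1}<m_{d}$ automatic from $m_{1}\le m_{2}<m_{d}$), while the latter collapses to $m_{1}<m_{2}=\cdots=m_{d}$, i.e.\ $S_{1,2}(\le,=^{d-2})$. For (3) the dichotomy is simply $m_{1}<m_{d}$ versus $m_{1}=m_{d}$ inside $S(\le^{d-1})$, the latter collapsing to $S(=^{d-1})$, and the two pieces exhaust the chain.

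I expect no genuine obstacle: each identity reduces to negating a single inequality, so disjointness is automatic and exhaustiveness is precisely the collapsing property. The only point I will state with care is the containment direction in (1) and (2)—that the ``shorter-span'' non-equality condition ($m_{1}\neq m_{d-1}$, resp.\ $m_{2}\neq m_{d}$) implies the ``longer-span'' one ($m_{1}\neq m_{d}$)—which is immediate from monotonicity but is the one place where the two indices of $S_{i,j}$ interact with the ambient chain rather than being purely formal.
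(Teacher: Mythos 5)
Your proof is correct: the paper states this lemma without proof (declaring it ``obvious''), and your argument---splitting each set on a single comparison ($m_{1}$ vs.\ $m_{d-1}$, $m_{2}$ vs.\ $m_{d}$, $m_{1}$ vs.\ $m_{d}$ respectively) and using the collapsing property of chains to identify the equality branch with the word carrying interior $=$ symbols---is exactly the intended justification. The one subtlety you flag, that the shorter-span inequality forces $m_{1}\neq m_{d}$ by monotonicity, is indeed the only point requiring a remark, and you handle it correctly.
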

Moreover, for a positive integer $d$, $\bm=(m_{1},\ldots,m_{d})\in\bbZ_{\ge 0}^{d}$ and $\bk=(k_{1},\ldots,k_{d})\in\bbZ_{\ge 1}^{d}$, we define
\[\Pi\binom{\bm}{\bk}\coloneqq\prod_{i=1}^{d}\frac{1}{(m_{i}+\alpha)(m_{i}+\beta)(m_{i}+\gamma)^{k_{i}-2}}.\]
Note that $\Pi\binom{m}{2}g_{m}^{1,1}=g_{m}^{0,0}$ holds for every $m\ge 0$.
\begin{remark}
    With these notations, the definition of the series $Z_{I}^{\star}$ is written as
    \[Z_{I}^{\star}(k,\bk)=\sum_{(l,\bm,l')\in S(\le^{d})}\frac{g_{l}^{0,0}}{g_{l'}^{0,0}}\frac{1}{(l+\gamma)^{k}}\Pi\binom{\bm,l'}{\bk}\]
    for $k\ge 1$ and $\bk\in(\bbZ_{\ge 1}^{d-1}\times\bbZ_{\ge 2})$.
\end{remark}
\begin{definition}
    For positive integers $k_{1},\ldots,k_{d}$ ($d\ge 2$) for some $k_{i}\ge 2$, we define
    \[H(k_{1},\ldots,k_{d})\coloneqq\sum_{(m_{1},\bm,m_{d})\in S_{1,d}(\le^{d-1})}\frac{g_{m_{1}}^{0,0}}{g_{m_{d}}^{0,0}}\frac{1}{(m_{1}+\gamma)^{k_{1}-1}}\Pi\binom{\bm,m_{d}}{k_{2},\ldots,k_{d}}\frac{1}{m_{d}-m_{1}}.\]
\end{definition}
\begin{remark}
    By definition, the equality
    \[H(k_{1},\ldots,k_{d})\coloneqq\sum_{(m_{1},\bm,m_{d})\in S_{1,d}(\le^{d-1})}\frac{g_{m_{1}}^{1,1}}{g_{m_{d}}^{0,0}}\Pi\binom{m_{1},\bm,m_{d}}{1+k_{1},k_{2},\ldots,k_{d}}\frac{1}{m_{d}-m_{1}}.\]
    holds.
\end{remark}
\subsection{Transport relations}
\begin{proposition}\label{prop:trans1}
    For non-negative integers $d$ and $j$, a positive integer $k\ge 2$ and $\bk\in\bbZ_{\ge 1}^{d}$, if $0\le j\le k-2$ we have
    \begin{equation}\label{eq:trans1}
        H(j+1,\bk,k-j)=H(j+2,\bk,k-j-1)-Z_{I}^{\star}(j+1,\bk,k-j)+Z(d+1\mid d+1\mid w-2d-1),
    \end{equation}
    where $w$ is the sum of all components of $\bk$ and $k$.
\end{proposition}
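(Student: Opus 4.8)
The plan is to establish \eqref{eq:trans1} by a direct, termwise comparison of the two connected sums $H(j+1,\bk,k-j)$ and $H(j+2,\bk,k-j-1)$. Writing the summation variables of $H$ as $m_{0}\le m_{1}\le\cdots\le m_{d+1}$ with $m_{0}\neq m_{d+1}$ and $\bm=(m_{1},\ldots,m_{d})$, both sums run over the \emph{same} index set $S_{1,d+2}(\le^{d+1})$, and, by the definition of $H$, their summands share the common factor
\[\frac{g_{m_{0}}^{0,0}}{g_{m_{d+1}}^{0,0}}\,\Pi\binom{\bm}{\bk}\,\frac{1}{(m_{d+1}+\alpha)(m_{d+1}+\beta)}\,\frac{1}{m_{d+1}-m_{0}}.\]
The two summands differ \emph{only} in the powers of $(m_{0}+\gamma)$ and $(m_{d+1}+\gamma)$ at the two endpoints, namely $(m_{0}+\gamma)^{-j}(m_{d+1}+\gamma)^{-(k-j-2)}$ for $H(j+1,\bk,k-j)$ versus $(m_{0}+\gamma)^{-(j+1)}(m_{d+1}+\gamma)^{-(k-j-3)}$ for $H(j+2,\bk,k-j-1)$.

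The key observation is the elementary identity
\[(m_{0}+\gamma)^{-j}(m_{d+1}+\gamma)^{-(k-j-2)}-(m_{0}+\gamma)^{-(j+1)}(m_{d+1}+\gamma)^{-(k-j-3)}=-(m_{d+1}-m_{0})\,(m_{0}+\gamma)^{-(j+1)}(m_{d+1}+\gamma)^{-(k-j-2)},\]
obtained by factoring out $(m_{0}+\gamma)^{-(j+1)}(m_{d+1}+\gamma)^{-(k-j-2)}$ and using $(m_{0}+\gamma)-(m_{d+1}+\gamma)=m_{0}-m_{d+1}$. The crucial point is that the factor $m_{d+1}-m_{0}$ produced here cancels precisely the connector $1/(m_{d+1}-m_{0})$ built into $H$. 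Subtracting the two sums therefore gives
\[H(j+1,\bk,k-j)-H(j+2,\bk,k-j-1)=-\sum_{(m_{0},\bm,m_{d+1})\in S_{1,d+2}(\le^{d+1})}\frac{g_{m_{0}}^{0,0}}{g_{m_{d+1}}^{0,0}}\,\frac{1}{(m_{0}+\gamma)^{j+1}}\,\Pi\binom{\bm,m_{d+1}}{\bk,k-j},\]
and the summand on the right is now exactly the summand of $Z_{I}^{\star}(j+1,\bk,k-j)$ as given by the earlier Remark expressing $Z_{I}^{\star}$ over $S(\le^{d+1})$; the only difference is that here the sum is restricted to $S_{1,d+2}(\le^{d+1})$.

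To finish, I would invoke $S(\le^{d+1})=S_{1,d+2}(\le^{d+1})\sqcup S(=^{d+1})$ from Lemma~\ref{lem:set}~\eqref{set3} (applied with $d$ replaced by $d+2$), which converts the restricted sum into $Z_{I}^{\star}(j+1,\bk,k-j)$ minus the diagonal sum over $S(=^{d+1})$; after rearranging this is exactly \eqref{eq:trans1}, provided the diagonal sum equals $Z(d+1\mid d+1\mid w-2d-1)$. On $S(=^{d+1})$ all variables coincide, $m_{0}=\cdots=m_{d+1}=n$, so $g_{m_{0}}^{0,0}/g_{m_{d+1}}^{0,0}=1$ and the sum collapses to a one–dimensional series. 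Counting exponents, there are $d+1$ factors each of $(n+\alpha)$ and $(n+\beta)$ (one from every entry $m_{1},\ldots,m_{d+1}$), while the exponent of $(n+\gamma)$ totals $(j+1)+\sum_{i=1}^{d}(k_{i}-2)+(k-j-2)=w-2d-1$; hence the diagonal sum is $\sum_{n\ge 0}(n+\alpha)^{-(d+1)}(n+\beta)^{-(d+1)}(n+\gamma)^{-(w-2d-1)}=Z(d+1\mid d+1\mid w-2d-1)$, as required.

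The mechanism is thus an elementary partial–fraction cancellation at the boundary, not anything involving $F_{m,n}$ or Lemmas~\ref{lem:31}--\ref{lem:32}. I expect the only real care to lie in the bookkeeping: matching the restricted summand with the Remark's expression for $Z_{I}^{\star}$ (in particular the endpoint weights $(m_{0}+\gamma)^{-(j+1)}$ and $(m_{d+1}+\gamma)^{-(k-j-2)}$), applying Lemma~\ref{lem:set} with the shifted length $d+2$, and the exponent count for the diagonal term; one should also confirm that the $H$-, $Z_{I}^{\star}$- and $Z$-series in play converge under $\Re(\alpha+\beta-\gamma)>0$, which is where the hypotheses $k\ge 2$ and $0\le j\le k-2$ are used.
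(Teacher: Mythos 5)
Your proof is correct and is essentially the paper's own argument: your endpoint identity is exactly the paper's partial fraction decomposition $\frac{1}{l'-l}\frac{1}{l'+\gamma}=\left(\frac{1}{l'-l}-\frac{1}{l'+\gamma}\right)\frac{1}{l+\gamma}$ multiplied out, and both proofs then invoke Lemma~\ref{lem:set}~\eqref{set3} to convert the sum restricted to $S_{1,d+2}(\le^{d+1})$ into $Z_{I}^{\star}(j+1,\bk,k-j)$ minus the diagonal term $Z(d+1\mid d+1\mid w-2d-1)$. The only difference is presentational: the paper splits the summand of $H(j+1,\bk,k-j)$ into the two pieces directly, whereas you subtract the two $H$-sums termwise and cancel the connector $\frac{1}{m_{d+1}-m_{0}}$.
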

\begin{proof}
    By definition, we have
    \[H(j+1,\bk,k-j)=\sum_{(l,\bm,l')\in S_{1,d+2}(\le^{d+1})}\frac{g_{l}^{0,0}}{g_{l'}^{0,0}}\frac{1}{l'-l}\frac{1}{(l+\gamma)^{j}}\Pi\binom{\bm,l'}{\bk,k-j}.\]
    Using the partial fraction decomposition
    \[\frac{1}{l'-l}\frac{1}{l'+\gamma}=\left(\frac{1}{l'-l}-\frac{1}{l'+\gamma}\right)\frac{1}{l+\gamma},\]
    we obtain
    \begin{align}
        H(j+1,\bk,k-j)
        &=\sum_{(l,\bm,l')\in S_{1,d+2}(\le^{d+1})}\frac{g_{l}^{0,0}}{g_{l'}^{0,0}}\frac{1}{(l+\gamma)^{j+1}}\left(\frac{1}{l'-l}-\frac{1}{l'+\gamma}\right)\Pi\binom{\bm,l'}{\bk,k-j-1}\\
        &=H(j+2,\bk,k-j-1)-Z_{I}^{\star}(j+1,\bk,k-j)+Z(d+1\mid d+1\mid w-2d-1),
    \end{align}
    where we used Lemma \ref{lem:set} \eqref{set3} in the last equality.
\end{proof}
\begin{corollary}\label{cor:trans2}
    Let $d,k,\bk$ and $w$ be with the same conditions as Proposition \ref{prop:trans1}.
    Then we have
    \begin{equation}\label{eq:trans2}
        H(1,\bk,k)=H(k-1,\bk,2)-\sum_{j=0}^{k-3}Z_{I}^{\star}(j+1,\bk,k-j)+(k-2)Z(d+1;d+1;w-2d-1).
    \end{equation}
\end{corollary}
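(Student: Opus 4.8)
The plan is to telescope the transport relation of Proposition \ref{prop:trans1} in the single index $j$. Observe first that the hypothesis $0\le j\le k-2$ of that proposition covers in particular every $j$ in the range $0\le j\le k-3$, so for each such $j$ we may use
\[
H(j+1,\bk,k-j)=H(j+2,\bk,k-j-1)-Z_{I}^{\star}(j+1,\bk,k-j)+Z(d+1\mid d+1\mid w-2d-1).
\]
Rewriting this as $H(j+1,\bk,k-j)-H(j+2,\bk,k-j-1)=-Z_{I}^{\star}(j+1,\bk,k-j)+Z(d+1\mid d+1\mid w-2d-1)$ and summing over $j=0,1,\ldots,k-3$, the left-hand side telescopes: the term $H(j+2,\bk,k-j-1)$ produced at step $j$ is exactly the head term $H((j{+}1)+1,\bk,k-(j{+}1))$ subtracted at step $j+1$. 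All intermediate $H$-terms therefore cancel, and only the head $H(1,\bk,k)$ (from $j=0$) and the tail $H(k-1,\bk,2)$ (from $j=k-3$) survive.

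It then remains to collect the two families of remaining contributions. The $Z_{I}^{\star}$-terms assemble, with no cancellation, into $\sum_{j=0}^{k-3}Z_{I}^{\star}(j+1,\bk,k-j)$, which is precisely the sum appearing in \eqref{eq:trans2}. The constant term $Z(d+1\mid d+1\mid w-2d-1)$ occurs once for each of the $k-2$ admissible values of $j$, so it contributes the factor $(k-2)$. Putting these pieces together gives
\[
H(1,\bk,k)-H(k-1,\bk,2)=-\sum_{j=0}^{k-3}Z_{I}^{\star}(j+1,\bk,k-j)+(k-2)Z(d+1\mid d+1\mid w-2d-1),
\]
which is exactly the asserted identity \eqref{eq:trans2} after transposing $H(k-1,\bk,2)$.

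The only genuine point to watch is the index bookkeeping at the two ends of the telescope, rather than any analytic subtlety. One must confirm that stopping the summation at $j=k-3$ is precisely what lands on the tail $H(k-1,\bk,2)$, which holds since $k-(k-3)-1=2$, and that the count of $Z$-contributions is $k-2$, the number of integers in $\{0,1,\ldots,k-3\}$. The boundary case $k=2$ is automatically consistent: there the sum over $j$ is empty and the prefactor $k-2$ vanishes, so the claim degenerates to the tautology $H(1,\bk,2)=H(1,\bk,2)$. Thus the corollary is a purely formal consequence of iterating Proposition \ref{prop:trans1}, and no further estimates are needed.
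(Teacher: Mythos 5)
Your proof is correct and is exactly the paper's argument: the paper proves Corollary \ref{cor:trans2} by summing the transport relation \eqref{eq:trans1} over $j=0,\ldots,k-3$ and letting the $H$-terms telescope, which is precisely your computation. Your additional checks of the endpoint bookkeeping and the degenerate case $k=2$ are sound but not needed beyond what the paper tacitly assumes.
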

\begin{proof}
    We get this claim by summing up \eqref{eq:trans1} over $j=0,\ldots,k-3$.
\end{proof}
\begin{proposition}\label{prop:trans3}
    Let $d,k,\bk$ and $w$ be with the same conditions as Proposition \ref{prop:trans1}.
    Then we have
    \begin{equation}\label{eq:trans3}
        H(k-1,\bk,2)=H(1,k,\bk)-tZ_{II}^{\star}(k,\bk,2)+Z(d+1\mid d+2\mid w-2d-2)+Z(d+2\mid d+1\mid w-2d-2).
    \end{equation}
\end{proposition}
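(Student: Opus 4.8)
The plan is to reduce \eqref{eq:trans3} to a single application of Lemma \ref{lem:32}. Denote the first variable of $H(k-1,\bk,2)$ by $l$, the $d$ middle variables by $\bm=(m_2,\ldots,m_{d+1})$, and the last (weight-$2$) variable by $l'$, so that, exactly as in the proof of Proposition \ref{prop:trans1},
\[H(k-1,\bk,2)=\sum_{(l,\bm,l')\in S_{1,d+2}(\le^{d+1})}\frac{g_l^{0,0}}{g_{l'}^{0,0}}\frac{1}{l'-l}\frac{1}{(l+\gamma)^{k-2}}\Pi\binom{\bm}{\bk}\frac{1}{(l'+\alpha)(l'+\beta)}.\]
First I would sum out $l'$. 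Since $\frac{1}{g_{l'}^{0,0}(l'+\alpha)(l'+\beta)}=\frac{1}{g_{l'}^{1,1}}$ by \eqref{eq:rec1}, the inner sum is $\sum_{l'\ge m_{d+1},\,l'>l}\frac{1}{l'-l}\frac{1}{g_{l'}^{1,1}}$, which equals $\frac{1}{g_l^{1,1}}F_{l,m_{d+1}}$ together with the single boundary term $l'=m_{d+1}$ (present exactly when $l<m_{d+1}$); call the resulting boundary sum $E$. Using $g_l^{0,0}/g_l^{1,1}=1/((l+\alpha)(l+\beta))$ turns the first factor into the weight-$k$ factor $\frac{1}{(l+\alpha)(l+\beta)(l+\gamma)^{k-2}}$ at $l$, so that $H(k-1,\bk,2)=\mathcal F+E$ with $\mathcal F=\sum\frac{1}{(l+\alpha)(l+\beta)(l+\gamma)^{k-2}}\Pi\binom{\bm}{\bk}F_{l,m_{d+1}}$.

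Next I would substitute Lemma \ref{lem:32} for $F_{l,m_{d+1}}$ (taking $m=l$, $n=m_{d+1}$) inside $\mathcal F$ and identify the three resulting pieces. The piece coming from $\sum_{h=0}^{l-1}\frac{1}{m_{d+1}-h}\frac{g_h^{0,0}}{g_{m_{d+1}}^{0,0}}$ introduces a new smallest index $h$; relabelling $h\mapsto m_1$, $l\mapsto m_2$ and shifting the middle block up by one, it becomes exactly $H(1,k,\bk)$ restricted to $m_1<m_2$, that is, $H(1,k,\bk)$ minus its $m_1=m_2$ part. The piece coming from $-t\sum_{h=0}^{l}g_h^{0,0}\sum_{m_{d+1}\le l''}\frac{1}{g_{l''}^{1,1}}$ becomes, after the same relabelling (with $h$ playing the role of the extra smallest variable $m_0$ of $Z_{II}^{\star}$ and $l''$ that of its extra weight-$2$ variable), exactly $-t\,Z_{II}^{\star}(k,\bk,2)$. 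These account for the first two terms on the right-hand side of \eqref{eq:trans3}.

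It then remains to show that the diagonal piece $(\alpha+\beta+l+m_{d+1})\frac{g_l^{0,0}}{g_{m_{d+1}}^{1,1}}$ of Lemma \ref{lem:32}, the omitted $m_1=m_2$ part of $H(1,k,\bk)$, and the boundary sum $E$ together produce the two $Z$-values. Writing $p=l$ and $q=m_{d+1}$, all three share the common factor $\frac{1}{(p+\alpha)(p+\beta)(p+\gamma)^{k-2}}\Pi\binom{\bm}{\bk}\frac{g_p^{0,0}}{g_q^{0,0}}$, and the remaining bracket is
\[\frac{\alpha+\beta+p+q}{(q+\alpha)(q+\beta)}-\frac{1}{q-p}+\frac{(p+\alpha)(p+\beta)}{(q+\alpha)(q+\beta)(q-p)},\]
which vanishes identically for $p<q$ by \eqref{eq:dec1}. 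Hence only the fully diagonal locus $p=q$ survives, which forces all of $l,m_2,\ldots,m_{d+1}$ equal to a single index $p$; there the bracket is $\frac{1}{p+\alpha}+\frac{1}{p+\beta}$ and the common factor collapses to $\frac{1}{(p+\alpha)^{d+1}(p+\beta)^{d+1}(p+\gamma)^{w-2d-2}}$, using $(k-2)+\sum_{i=1}^{d}(k_i-2)=w-2d-2$. Summing the two resulting pieces over $p\ge0$ gives precisely $Z(d+1\mid d+2\mid w-2d-2)+Z(d+2\mid d+1\mid w-2d-2)$, which completes \eqref{eq:trans3}.

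The step I expect to be most delicate is this last one: one must carry the three boundary contributions—the diagonal term from Lemma \ref{lem:32}, the $m_1=m_2$ defect left over from identifying $H(1,k,\bk)$, and the off-by-one sum $E$ from the $l'$-summation—over a single common summation domain, so that \eqref{eq:dec1} cancels the whole off-diagonal range $p<q$ and isolates $p=q$. Tracking the strict-versus-weak inequalities (the constraints $l'>l$ against $l'\ge m_{d+1}$, and $m_1<m_2$ against $m_1\le m_2$) is where the bookkeeping must be done with care; once the off-diagonal cancellation is in place, the surviving diagonal sum reproduces the stated pair of $Z$-values by a direct exponent count.
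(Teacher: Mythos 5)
Your proof is correct, and it reaches \eqref{eq:trans3} by a genuinely different organization of the computation than the paper's. The paper first splits the summation domain via Lemma \ref{lem:set} \eqref{set1} into the part where the first variable differs from the $(d+1)$-st and the part where the first $d+1$ variables all coincide; on the former the inner $l'$-sum is exactly $F_{l,m-1}$, handled by Lemma \ref{lem:31}, and on the latter it is $F_{l,l}$, handled by Lemma \ref{lem:32} with $m=n$; the resulting six sums are then reassembled with Lemma \ref{lem:set} \eqref{set1} and \eqref{set2}. You never split the domain and never invoke Lemma \ref{lem:31}: you write the inner sum as $F_{l,m_{d+1}}$ plus the boundary term $E$ at $l'=m_{d+1}$ and apply Lemma \ref{lem:32} in its general $m\le n$ form everywhere, paying for this with the extra off-diagonal cancellation among the diagonal term of Lemma \ref{lem:32}, the $m_1=m_2$ defect of $H(1,k,\bk)$, and $E$, which you correctly reduce to \eqref{eq:dec1}. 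In fact that cancellation is exactly the compatibility of the two lemmas: since $F_{l,m-1}=F_{l,m}+\frac{g_{l}^{1,1}}{(m-l)g_{m}^{1,1}}$, the discrepancy between Lemma \ref{lem:31} evaluated at $(l,m-1)$ and Lemma \ref{lem:32} evaluated at $(l,m)$ plus the boundary term is precisely \eqref{eq:dec1}, so your route in effect re-derives Lemma \ref{lem:31} on the fly inside the proposition. Your arrangement buys economy of tools (only Lemma \ref{lem:32} and the definition of $F$ are needed); the paper's buys a proposition-level argument that is pure set-theoretic bookkeeping, all partial-fraction work having been pre-packaged into the two lemmas. Your identifications of the pieces---$H(1,k,\bk)$ minus its $m_1=m_2$ part, $-tZ_{II}^{\star}(k,\bk,2)$, and the surviving diagonal sum whose exponent count $(k-2)+\sum_{i}(k_{i}-2)=w-2d-2$ yields $Z(d+1\mid d+2\mid w-2d-2)+Z(d+2\mid d+1\mid w-2d-2)$---are all accurate, as is your closing remark that the only delicate point is the strict-versus-weak inequality bookkeeping.
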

\begin{proof}
    The definition of $H$ and \eqref{eq:rec1} yield
    \[H(k-1,\bk,2)=\sum_{(l,\bm,m,l')\in S_{1,d+2}(\le^{d+1})}\frac{g_{l}^{1,1}}{g_{l'}^{1,1}}\frac{1}{l'-l}\Pi\binom{l,\bm,m}{k,\bk}.\]
    Using Lemma \ref{lem:set} \eqref{set1}, we have
    \begin{align}
        \begin{split}\label{eq:33_key}
        H(k-1,\bk,2)
        &=\sum_{(l,\bm,m)\in S_{1,d+1}(\le^{d})}\Pi\binom{l,\bm,m}{k,\bk}\cdot g_{l}^{1,1}\sum_{m\le l'}\frac{1}{l'-l}\frac{1}{g_{l'}^{1,1}}+\sum_{l\ge 0}\Pi\binom{\overbrace{l,\ldots,l}^{d+1}}{k,\bk}\cdot g_{l}^{1,1}\sum_{l<l'}\frac{1}{l'-l}\frac{1}{g_{l'}^{1,1}}\\
        &=\sum_{(l,\bm,m)\in S_{1,d+1}(\le^{d})}\Pi\binom{l,\bm,m}{k,\bk}F_{l,m-1}+\sum_{l\ge 0}\Pi\binom{\overbrace{l,\ldots,l}^{d+1}}{k,\bk}F_{l,l}.
    \end{split}
    \end{align}
Let us calculate the sums on the right-hand side.
For the first term, by Lemma \ref{lem:31} we obtain    
\begin{equation}\label{eq:after31}
    \sum_{(l,\bm,m)\in S_{1,d+1}(\le^{d})}\Pi\binom{l,\bm,m}{k,\bk}F_{l,m-1}
    =\sum_{(l,\bm,m)\in S_{1,d+1}(\le^{d})}\Pi\binom{l,\bm,m}{k,\bk}\sum_{0\le h\le l}g_{h}^{0,0}\left(\frac{1}{m-h}\frac{1}{g_{m}^{0,0}}-\sum_{m\le l'}\frac{t}{g_{l'}^{1,1}}\right).
\end{equation}
On the other hand, using Lemma \ref{lem:32} we compute the second term as
\begin{align}
    \begin{split}\label{eq:after32}
    \sum_{l\ge 0}\Pi\binom{\overbrace{l,\ldots,l}^{d+1}}{k,\bk}F_{l,l}
    &=\sum_{l\ge 0}\Pi\binom{\overbrace{l,\ldots,l}^{d+1}}{k,\bk}\left((\alpha+\beta+2l)\frac{g_{l}^{0,0}}{g_{l}^{1,1}}+\sum_{0\le h<l}\frac{1}{l-h}\frac{g_{h}^{0,0}}{g_{l}^{0,0}}-t\sum_{0\le h\le l}g_{h}^{0,0}\sum_{l\le l'}\frac{1}{g_{l'}^{1,1}}\right)\\
    &=\sum_{l\ge 0}\Pi\binom{\overbrace{l,\ldots,l}^{d+1}}{k,\bk}\left(\frac{1}{l+\alpha}+\frac{1}{l+\beta}+\sum_{0\le h<l}\frac{1}{l-h}\frac{g_{h}^{0,0}}{g_{l}^{0,0}}-t\sum_{0\le h\le l}g_{h}^{0,0}\sum_{l\le l'}\frac{1}{g_{l'}^{1,1}}\right).
    \end{split}
\end{align}
Combining \eqref{eq:after31}, \eqref{eq:after32} and \eqref{eq:33_key}, it follows that
\begin{align}
    H(k-1,\bk,2)
        &=\sum_{(l,\bm,m)\in S_{1,d+1}(\le^{d})}\Pi\binom{l,\bm,m}{k,\bk}F_{l,m-1}+\sum_{l\ge 0}\Pi\binom{\overbrace{l,\ldots,l}^{d+1}}{k,\bk}F_{l,l}\\
        &=\begin{multlined}[t]
            \sum_{(l,\bm,m)\in S_{1,d+1}(\le^{d})}\Pi\binom{l,\bm,m}{k,\bk}\left(\sum_{0\le h\le l}g_{h}^{0,0}\left(\frac{1}{m-h}\frac{1}{g_{m}^{0,0}}-\sum_{m\le l'}\frac{t}{g_{l'}^{1,1}}\right)\right)\\
            +\sum_{l\ge 0}\Pi\binom{\overbrace{l,\ldots,l}^{d+1}}{k,\bk}\left(\frac{1}{l+\alpha}+\frac{1}{l+\beta}+\sum_{0\le h<l}\frac{1}{l-h}\frac{g_{h}^{0,0}}{g_{l}^{0,0}}-t\sum_{0\le h\le l}g_{h}^{0,0}\sum_{l\le l'}\frac{1}{g_{l'}^{1,1}}\right).
        \end{multlined}\\
        &=\begin{multlined}[t]
            \sum_{(h,l,\bm,m)\in S_{2,d+2}(\le^{d+1})}\frac{g_{k}^{0,0}}{g_{m}^{0,0}}\frac{1}{m-h}\Pi\binom{l,\bm,m}{k,\bk}
        -t\sum_{(h,l,\bm',l')\in S_{2,d+2}(\le^{d+2})}\frac{g_{h}^{0,0}}{g_{l'}^{1,1}}\Pi\binom{l,\bm'}{k,\bk}\\
        +Z(d+1\mid d+2\mid w-2d-2)+Z(d+2\mid d+1\mid w-2d-2)\\
        +\sum_{(h,l,\ldots,l)\in S_{1,2}(\le,=^{d})}\frac{g_{h}^{0,0}}{g_{l}^{0,0}}\frac{1}{l-h}\Pi\binom{\overbrace{l,\ldots,l}^{d+1}}{k,\bk}
        -t\sum_{(h,l,\ldots,l,l')\in S(\le,=^{d},\le)}\frac{g_{h}^{0,0}}{g_{l'}^{1,1}}\Pi\binom{\overbrace{l,\ldots,l}^{d+1}}{k,\bk}
        \end{multlined}
\end{align}
holds. Finally, using Lemma \ref{lem:set} \eqref{set1} (resp.~\eqref{set2}) for the first and fifth (resp.~second and sixth) terms, we obtain
\[H(k-1,\bk,2)=H(1,k,\bk)-tZ_{II}^{\star}(k,\bk,2)+Z(d+1\mid d+2\mid w-2d-2)+Z(d+2\mid d+1\mid w-2d-2).\]
\end{proof}
\begin{theorem}\label{thm:main2}
    Let $k_{1},\ldots,k_{d}$ be positive integers such that $k_{i}\ge 2$ for $1\le i\le d$.
    Then we have
    \begin{multline}
            \sum_{i=1}^{d}\sum_{j=0}^{k_{i}-3}Z_{I}^{\star}(j+1,k_{i+1},\ldots,k_{d},k_{1},\ldots,k_{i-1},k_{i}-j)
            +t\sum_{i=1}^{d}Z_{II}^{\star}(k_{i},\ldots,k_{d},k_{1},\ldots,k_{i-1},2)\\
            =dZ(d\mid d+1\mid k-2d)+dZ(d+1\mid d\mid k-2d)+(k-2d)Z(d\mid d\mid k-2d+1),
    \end{multline}
    where $k\coloneqq k_{1}+\cdots+k_{d}$.
\end{theorem}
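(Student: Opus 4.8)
The plan is to compose the two transport relations, Corollary~\ref{cor:trans2} and Proposition~\ref{prop:trans3}, around the cycle of indices so that the auxiliary sums $H$ telescope away. For each $1\le i\le d$ write $\mathbf{r}_{i}\coloneqq(k_{i+1},\ldots,k_{d},k_{1},\ldots,k_{i-1})\in\bbZ_{\ge 2}^{d-1}$ for the tuple obtained from $(k_{1},\ldots,k_{d})$ by deleting $k_{i}$ and rotating so that the entry following $k_{i}$ comes first, and set
\[
    G_{i}\coloneqq H(1,k_{i},\mathbf{r}_{i})=H(1,k_{i},k_{i+1},\ldots,k_{d},k_{1},\ldots,k_{i-1}).
\]
Since $k_{i}\ge 2$ and every entry of $\mathbf{r}_{i}$ is $\ge 2$, both Corollary~\ref{cor:trans2} and Proposition~\ref{prop:trans3} apply with $\bk=\mathbf{r}_{i}$ (so the parameter ``$d$'' appearing there equals $d-1$) and $k=k_{i}$; in every instance the associated weight $w$ equals $k$.

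First I would apply Corollary~\ref{cor:trans2} and then substitute Proposition~\ref{prop:trans3} for the term $H(k_{i}-1,\mathbf{r}_{i},2)$ it produces, so that this intermediate $H$ cancels. Using $w-2(d-1)-1=k-2d+1$ and $w-2(d-1)-2=k-2d$ to evaluate the $Z$-indices, this yields for each $i$
\begin{multline*}
    H(1,\mathbf{r}_{i},k_{i})=H(1,k_{i},\mathbf{r}_{i})-\sum_{j=0}^{k_{i}-3}Z_{I}^{\star}(j+1,\mathbf{r}_{i},k_{i}-j)-tZ_{II}^{\star}(k_{i},\mathbf{r}_{i},2)\\
    +(k_{i}-2)Z(d\mid d\mid k-2d+1)+Z(d\mid d+1\mid k-2d)+Z(d+1\mid d\mid k-2d).
\end{multline*}

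The decisive bookkeeping is that the two $H$-sums here are consecutive members of the cyclic family $\{G_{i}\}$. Indeed $H(1,k_{i},\mathbf{r}_{i})=G_{i}$ by definition, while
\[
    H(1,\mathbf{r}_{i},k_{i})=H(1,k_{i+1},\ldots,k_{d},k_{1},\ldots,k_{i-1},k_{i})=G_{i+1},
\]
reading indices cyclically modulo $d$ (so that $G_{d+1}=G_{1}$). Hence the displayed identity becomes $G_{i+1}-G_{i}$ equal to the remaining right-hand side. Summing over $i=1,\ldots,d$, the left-hand side telescopes to $\sum_{i}(G_{i+1}-G_{i})=0$, while on the right $\sum_{i}(k_{i}-2)=k-2d$ and each of the other two constant $Z$-terms occurs $d$ times; rearranging then gives exactly the claimed identity.

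All the analytic difficulty has already been absorbed into Lemmas~\ref{lem:31}--\ref{lem:32} and the transport relations, so beyond that point the argument is purely formal. The one step that deserves care—and is really the only place an error could hide—is the index accounting: one must verify that the rotation $\mathbf{r}_{i}$ together with the leading/trailing placement of $k_{i}$ makes $H(1,\mathbf{r}_{i},k_{i})$ and $H(1,k_{i},\mathbf{r}_{i})$ genuinely the $(i+1)$-th and $i$-th cyclic rotations, since this is precisely what forces the $H$-terms to cancel. The degenerate cases are automatically consistent: when $k_{i}=2$ the inner sum $\sum_{j=0}^{k_{i}-3}$ is empty and the coefficient $k_{i}-2$ vanishes, matching the absence of a corresponding $Z_{I}^{\star}$ contribution.
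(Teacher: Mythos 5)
Your proposal is correct and follows essentially the same route as the paper: the paper likewise combines Corollary~\ref{cor:trans2} with Proposition~\ref{prop:trans3} into the single relation $H(1,\bk,k')-H(1,k',\bk)=-\sum_{j}Z_{I}^{\star}(\cdots)-tZ_{II}^{\star}(\cdots)+(\text{three }Z\text{-terms})$ and then telescopes it cyclically, which is exactly your $G_{i+1}-G_{i}$ argument. Your write-up merely makes explicit the cyclic bookkeeping (the identification $H(1,\mathbf{r}_{i},k_{i})=G_{i+1}$, the vanishing of $\sum_{i}(G_{i+1}-G_{i})$, and $\sum_{i}(k_{i}-2)=k-2d$) that the paper compresses into the phrase ``making repeated use of this formula.''
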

\begin{proof}
    For $k'\ge 1$ and $\bk\in\bbZ_{\ge 2}^{d'}$, using Corollary \ref{cor:trans2} and Proposition \ref{prop:trans3} we have
\begin{multline}
H(1,\bk,k')-H(1,k',\bk)=-\sum_{j=0}^{k'-3}Z_{I}^{\star}(j+1,\bk,k'-j)-tZ_{II}^{\star}(k',\bk,2)\\
+Z(d'+1\mid d'+2\mid w-2d'-2)+Z(d'+2\mid d'+1\mid w-2d'-2)+(k'-2)Z(d'+1;d'+1;w-2d'-1),
\end{multline}
where $w$ is the sum of every entry of $\bk$ and $k'$.
Then, making repeated use of this formula, we obtain the theorem.
\end{proof}


\begin{thebibliography}{OW}
    \bibitem[I1]{igarashi11} M.~Igarashi, \emph{Cyclic sum of certain parametrized multiple series}, J.~Number Theory \textbf{131} (2011), 508--518.
    \bibitem[I2]{igarashi22} M.~Igarashi, \emph{Note on cyclic sum of certain parametrized multiple series}, preprint, \texttt{arXiv:2206.01190}.
    \bibitem[OW]{ow06} Y.~Ohno and N.~Wakabayashi, \emph{Cyclic sum of multiple zeta values}, Acta Arithmetica \textbf{123} (2006), 289--295.
\end{thebibliography}
\end{document}